\newtheorem{theorem}{Theorem}
\newtheorem{lemma}{Lemma}
\newtheorem{proposition}{Proposition}
\newtheorem{conjecture}{Conjecture}
\newtheorem{claim}{Claim}
\theoremstyle{definition}
\title{Kempe equivalence of $4$-critical planar graphs}
\author{Carl Feghali\thanks{Univ Lyon, EnsL, CNRS, LIP, F-69342, Lyon Cedex 07, France, email: \texttt{carl.feghali@ens-lyon.fr}}}
\date{}
\begin{document}
\maketitle

\begin{abstract}
Answering a question of Mohar from 2007, we show that for every $4$-critical planar graph, its set of $4$-colorings is a Kempe class. 
 \end{abstract}

\section{Introduction}

Let $G$ be a graph, and let $k$ be a non-negative integer. 
A (proper) \emph{$k$-coloring} of $G$ is a function $\varphi: V(G) \rightarrow \{1, \dots, k\}$ such that $\varphi(u) \neq \varphi(v)$ whenever $uv \in E(G)$.

 A \emph{Kempe chain} in colors $\{a, b\}$ is a maximal connected subgraph $K$ of $G$ such that every vertex of $K$ has color $a$ or $b$. 
 By swapping the colors $a$ and $b$ on $K$, a new coloring is obtained. This operation is called a \emph{$K$-change}. If $c_2$ is a $k$-coloring obtained from a $k$-coloring $c_1$ by a single $K$-change, then we write $c_1 \sim_k c_2$. Two $k$-colorings $c_1$ and $c_2$ are \emph{$K$-equivalent} (or $K^k$-equivalent) if $c_1$ be obtained from $c_2$ by a sequence of $K$-changes. A $K$-change is \emph{trivial} if the Kempe chain under consideration consists of a single vertex. 
 
 Let $C_G^k$ be the set of $k$-colorings of $G$. The equivalence classes $C_G^k / \sim_k$ are called \emph{Kempe classes}. The number of Kempe classes of $G$ is denoted $\mbox{Kc}(G, k)$. 
 
 Kempe chains were introduced by Kempe in his failed attempt at proving the Four Colour Theorem. Nevertheless, they have proved to be one of the most useful tools in graph coloring theory. As pointed out by Mohar \cite{mohar1}, the number of Kempe equivalence classes of colorings has some applications in statistical physics \cite{mohar2009new} and Markov chains \cite{vigoda2000improved}. Furthermore, there are many results on determining whether all colorings of a graph are Kempe equivalent or not whenever the graph belongs to a special graph class. For instance, Meyniel \cite{meyniel1} showed that the set of $5$-colorings of a planar graph forms a single Kempe class, and Las Vergnas and Meyniel \cite{meyniel3} extended this result to $K_5$-minor-free graphs. Mohar \cite{mohar1} showed that the set of $4$-colorings of every $3$-colorable planar graph is a Kempe class, and asked whether his result can be extended to $4$-critical planar graphs (a graph $G$ is \emph{$4$-critical} if it is not $3$-colorable, but every proper subgraph of $G$ is $3$-colorable). In that same paper, Mohar also conjectured that the set of $k$-colorings of a $k$-regular graph forms a Kempe class, and this was settled in \cite{bonamyx, FJP15}. For further details and examples, we refer the reader to \cite{bonamydiameter, bonamyshortest, cranston2021kempe, mohar1, He13}. The edge-coloring version has also been considered in several papers \cite{belcastrocounting, bonamy2021vizing, mcdonald, ozeki2022kempe} so has the vertex-coloring version where only trivial Kempe changes are allowed \cite{cereceda2008connectedness, feghali2016reconfigurations}.
 
 In this note, we answer Mohar's aforementioned problem in the positive. 

\begin{theorem}\label{thm:main}
Let $G$ be a $4$-critical planar graph. Then $\mbox{Kc}(G, 4) = 1$. 
\end{theorem}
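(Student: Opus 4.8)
The plan is to fix an edge $uv$ of $G$ and exploit that, by $4$-criticality, $G-uv$ is a $3$-colorable planar graph; Mohar's theorem then gives that all $4$-colorings of $G-uv$ form a single Kempe class \emph{in $G-uv$}. Since the $4$-colorings of $G$ are exactly the $4$-colorings of $G-uv$ in which $u$ and $v$ receive distinct colors, the whole task reduces to transferring Kempe connectivity from $G-uv$ to $G$ while respecting the extra edge $uv$. As a preliminary normalization I would record that every global permutation of the four colors is achievable by a sequence of Kempe swaps in $G$ (swap every chain in the two colors being exchanged, one component at a time, each intermediate coloring remaining proper); in particular every $4$-coloring of $G$ is $G$-Kempe-equivalent to one with $\varphi(u)=1$ and $\varphi(v)=2$, which simplifies the later bookkeeping.

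The key structural step is to compare Kempe chains in $G$ and in $G-uv$. I would classify a single Kempe move of $G-uv$ acting between two colorings that are both proper in $G$ according to whether it recolors neither, both, or exactly one of $u,v$: in the first case the relevant chain avoids the colors on $u$ and $v$ and is identical in the two graphs; in the second case $u$ and $v$ already lie in a common chain of $G-uv$; and in the third case properness in $G$ forces the untouched endpoint to carry a color outside the two swap-colors, so the chain is again unaffected by the edge $uv$. In all three cases the move is realizable as a single Kempe move of $G$. Consequently the Kempe graph of $G$ on the $4$-colorings of $G$ contains the subgraph of the (connected, by Mohar) Kempe graph of $G-uv$ induced on the colorings that are proper in $G$; the only $G-uv$-moves with no direct $G$-counterpart are the \emph{bad moves}, namely those recoloring exactly one of $u,v$ into the current color of the other, i.e.\ precisely the moves entering or leaving the set of improper colorings where $u$ and $v$ agree.

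It therefore suffices to connect, inside $G$, the proper colorings across each excursion that Mohar's sequence makes into the improper set $\{c : c(u)=c(v)\}$. Here the extra flexibility of $G$ enters: whenever $\{c(u),c(v)\}=\{a,b\}$, the edge $uv$ merges the $a,b$-chains of $u$ and of $v$ into a single chain of $G$, so swapping this combined chain is a legitimate single move of $G$ that corresponds in $G-uv$ to a pair of chain-swaps (swap $u$'s chain, then $v$'s chain) separated by exactly one improper coloring. Thus each elementary visit to the improper set can in principle be shortcut by a combined-chain swap, and my aim would be to show that every maximal improper run of Mohar's sequence can be rerouted this way, reorganizing the individual endpoint-recolorings into combined-chain swaps after normalizing the colors on $u,v$. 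I expect this rerouting to be the main obstacle: one must verify that deleting the improper colorings from the connected Kempe graph of $G-uv$ and re-adding the combined-chain swaps available in $G$ still leaves the proper colorings connected, which is the genuinely delicate, case-based heart of the argument.
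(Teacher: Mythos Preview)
Your classification of the ``safe'' Kempe moves between two colorings that are proper in $G$ is correct, but the rerouting step you flag as ``the main obstacle'' is not a technicality waiting to be filled in: it is the entire problem, and there is concrete evidence that it fails. Notice that after the initial appeal to Mohar's theorem your argument uses nothing about $G$ beyond the single edge $uv$; if the rerouting could always be carried out, you would have shown that $\mathrm{Kc}(G-uv,k)=1$ implies $\mathrm{Kc}(G,k)=1$ for arbitrary $G$. This is false already for $k=3$: the triangular prism has $\mathrm{Kc}(\cdot,3)=2$, whereas deleting any one of its three matching edges leaves two vertices of degree $2$ whose successive removal reduces to a $4$-cycle, so by the low-degree lemma the prism minus that edge has $\mathrm{Kc}(\cdot,3)=1$. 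The combined-chain swaps you propose are themselves Kempe moves of $G$, so by definition they cannot cross between distinct $G$-Kempe classes; they simply do not, in general, repair the disconnections created by excising the improper colorings. Even if one restricts to planar graphs and $k=4$, a working version of your transfer step would, by induction on the number of edges, yield $\mathrm{Kc}(G,4)=1$ for \emph{every} planar $G$, a statement that as far as I am aware remains open and is certainly not a consequence of the theorem at hand.

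The paper's proof takes a genuinely different route that exploits $4$-criticality in an essential way. A theorem of Koester guarantees a vertex $v$ of degree at most $4$; degree at most $3$ is disposed of by the low-degree lemma applied to $G-v$. For degree exactly $4$ one first shows every $4$-coloring is $K$-equivalent to a \emph{$v$-good} one (all four colors on $N[v]$), and then---this is the substantial part---embeds $G-v$ into a $3$-colorable triangulation of the sphere, where Fisk's nonsingular-edge argument produces a Kempe path that never assigns colour $4$ to any vertex of the triangulation; this path therefore lifts back to $G$ with $v$ kept at colour $4$ throughout. It is the deletion of a low-degree \emph{vertex}, together with the control over colour $4$ coming from Fisk's argument, that makes the lifting go through; edge deletion offers no comparable handle.
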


We should note that Theorem \ref{thm:main} is best possible in the sense that there are infinitely many planar graphs with more than one Kempe equivalence class as shown by Mohar \cite{moharakempic}. 

\section{Preliminaries}

In this section, we gather some of the necessary tools to establish Theorem~\ref{thm:main}. The first lemma is implicit in \cite{mohar1}.

\begin{lemma}[Mohar~{\cite[Proposition 2.4]{mohar1}}]\label{lem:three}
Let $k \geq 1$ be an integer,  $G$ be a graph, and $v$ a vertex of $G$ with degree strictly less than $k$. If $\mbox{Kc}(G - v, k) = 1$, then $\mbox{Kc}(G, k) = 1$. 
\end{lemma}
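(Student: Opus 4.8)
The plan is to transfer Kempe equivalence from $G-v$ to $G$ by replaying, inside $G$, a sequence of Kempe changes that witnesses $\mbox{Kc}(G-v,k)=1$. Fix two $k$-colorings $c_1,c_2$ of $G$. Their restrictions to $G-v$ are $k$-colorings of $G-v$, so by hypothesis there is a sequence of Kempe changes $d_0 \sim_k d_1 \sim_k \cdots \sim_k d_m$ in $G-v$ with $d_0 = c_1|_{G-v}$ and $d_m = c_2|_{G-v}$. I would maintain a $k$-coloring of $G$ restricting to $d_i$ on $G-v$, starting from $c_1$ (which restricts to $d_0$), and show that each elementary step $d_{i-1}\sim_k d_i$ can be realised in $G$ by one or two Kempe changes, at the cost of possibly altering the colour of $v$. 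Since $\deg(v)<k$, at every stage at least one colour is missing on $N(v)$; call such a colour \emph{free} for $v$. Two observations drive the argument: (A) if the current colour of $v$ lies outside the pair $\{a,b\}$ used in the step, then the $\{a,b\}$-chains of $G$ and of $G-v$ coincide, so the same chain can be swapped in $G$ without disturbing $v$; and (B) if $\gamma$ and $\gamma'$ are both free for $v$, then $\{v\}$ is itself a $\{\gamma,\gamma'\}$-chain of $G$, so a single Kempe change recolours $v$ from $\gamma$ to $\gamma'$ and changes nothing else.

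Using (A) and (B), the generic step is easy: before performing the $\{a,b\}$-swap I would first use (B) to move $v$ to a free colour $\gamma^{\ast}\notin\{a,b\}$, and then apply (A). This works whenever such a $\gamma^{\ast}$ exists, that is, whenever the free colours at $v$ are not all contained in $\{a,b\}$.

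The main obstacle is precisely the remaining case, in which every free colour at $v$ lies in $\{a,b\}$ and so $v$ cannot be individually recoloured out of $\{a,b\}$. Here $\deg(v)\in\{k-2,k-1\}$ and the colours on $N(v)$ are tightly constrained. I would resolve it as follows. If $v$ is not adjacent to the chain $D$ being swapped (which is forced when both $a$ and $b$ are free, and also when the unique $\{a,b\}$-neighbour of $v$ lies outside $D$), then swapping $D$ in $G$ leaves $v$ and all of its neighbours untouched, so (A) still applies verbatim. Otherwise $v$ is adjacent to $D$; since $v$ then has exactly one neighbour coloured from $\{a,b\}$ and it lies in $D$, the $\{a,b\}$-component of $G$ containing $v$ is exactly $D\cup\{v\}$. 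Swapping this single chain performs the desired swap on $G-v$ while simultaneously recolouring $v$, and a short check shows the new colour of $v$ is again free. In every case the step is realised in $G$ by Kempe changes, and we obtain a coloring restricting to $d_i$.

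Finally, after replaying all $m$ steps I reach a $k$-coloring of $G$ restricting to $d_m=c_2|_{G-v}$, whose value at $v$ is some free colour that need not equal $c_2(v)$. Since $c_2(v)$ is also free for this restriction, one further application of (B) recolours $v$ to $c_2(v)$, yielding $c_2$. Hence $c_1$ and $c_2$ are Kempe-equivalent in $G$, and as they were arbitrary, $\mbox{Kc}(G,k)=1$. The only delicate point is the frozen case above; everything else is direct bookkeeping of which chain to swap.
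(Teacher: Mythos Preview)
The paper does not actually prove this lemma; it is quoted from Mohar and only cited. Your argument is correct and is essentially the standard lifting argument one finds (implicitly) in Mohar's paper: replay the $G-v$ sequence in $G$, moving $v$ out of the active colour pair whenever possible, and otherwise absorbing $v$ into the chain being swapped. Your case analysis of the ``frozen'' situation (all free colours contained in $\{a,b\}$) is the right one, and the key observation that $v$ then has at most one $\{a,b\}$-neighbour---so $D\cup\{v\}$ is a genuine Kempe chain of $G$ whose swap realises the $G-v$ step---is exactly what makes the argument go through.
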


\begin{lemma}[Mohar~{\cite[Lemma 4.2]{mohar1}}]\label{lem:super}
Suppose that $G$ is a subgraph of a graph $G'$. Let $c'_1, c'_2$ be $r$-colorings of $G'$. For $i \in \{1, 2\}$, denote by $c_i$ the restriction of $c'_i$ to $G$. If $c'_1$ and $c'_2$ are $K^r$-equivalent, then $c_1$ and $c_2$ are $K^r$-equivalent colorings of $G$. 
\end{lemma}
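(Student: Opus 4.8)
The plan is to induct on the length of a sequence of single $K$-changes witnessing the $K^r$-equivalence of $c'_1$ and $c'_2$, thereby reducing to the case in which $c'_2$ arises from $c'_1$ by a single $K$-change. Indeed, if $c'_1 = d_0 \sim_r d_1 \sim_r \dots \sim_r d_n = c'_2$, then by transitivity of $K^r$-equivalence it suffices to treat each consecutive pair, so suppose $c'_2$ is obtained from $c'_1$ by swapping the colors $a$ and $b$ along a single Kempe chain $K'$ of $G'$. I want to show that the restrictions $c_1$ and $c_2$ are $K^r$-equivalent colorings of $G$.

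First I would examine the trace $K' \cap V(G)$. Although $K'$ is connected in $G'$, its restriction to $G$ need not be, so let $K_1, \dots, K_m$ be the connected components (in $G$) of the subgraph of $G$ induced on $K' \cap V(G)$. Every vertex of each $K_j$ has color $a$ or $b$, so the crucial step is to verify that each $K_j$ is in fact a Kempe chain of $G$, that is, a \emph{maximal} connected $\{a,b\}$-colored subgraph. For maximality, suppose some $v \in V(K_j)$ has a neighbour $w$ in $G$ with $c_1(w) \in \{a,b\}$. Since $G$ is a subgraph of $G'$, both the edge $vw$ and the colors are inherited by $G'$, so $w$ lies in the same $\{a,b\}$-Kempe chain of $G'$ as $v$, namely $K'$; hence $w \in K' \cap V(G)$, and being adjacent to $v$ it belongs to the same component $K_j$. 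This confirms that each $K_j$ is a Kempe chain of $G$.

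Now, performing the $K'$-change on $G'$ swaps $a$ and $b$ on precisely the vertices of $K' \cap V(G) = V(K_1) \cup \dots \cup V(K_m)$ once we restrict attention to $G$. Because the $K_j$ are pairwise disjoint and, as distinct components, pairwise non-adjacent in $G$, this is exactly the effect of carrying out the $K_1$-change, then the $K_2$-change, and so on up to the $K_m$-change, one after another in $G$: after executing the first $j-1$ of them the coloring differs from $c_1$ only on vertices that are neither in nor adjacent to $K_j$, so $K_j$ remains a valid Kempe chain. Thus $c_2$ is reached from $c_1$ by a finite sequence of $K$-changes, giving $c_1 \sim_r \cdots \sim_r c_2$ in $G$, and completing the induction.

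I expect the only genuine subtlety to be the maximality argument for the $K_j$: one must use that adjacency and colors in $G$ are inherited from $G'$ so that no neighbour coloured $a$ or $b$ can escape the component. Everything else—the reduction to a single $K$-change and the decomposition of one $K'$-change into several disjoint component changes—is routine bookkeeping.
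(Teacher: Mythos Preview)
Your argument is correct. The paper does not supply its own proof of this lemma; it is quoted verbatim from Mohar~\cite{mohar1} without justification, so there is nothing to compare against. Your proof---reducing to a single $K$-change and decomposing $K' \cap V(G)$ into its connected components in $G$, each of which is a genuine Kempe chain of $G$ by the maximality argument you give---is the standard one and is exactly what Mohar does in the cited reference.
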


\begin{lemma}[Mohar~{\cite[Theorem 4.4]{mohar1}}]\label{lem:mohar}
For every $3$-colorable planar graph $G$, $\mbox{Kc}(G, 4) = 1$. 
\end{lemma}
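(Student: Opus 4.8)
The plan is to fix the color set as $\{1,2,3,4\}$, to single out color $4$ as a ``spare'' (no proper $3$-coloring of $G$ needs it), and to show two things: (A) every $4$-coloring of $G$ is $K^4$-equivalent to a coloring that avoids color $4$, i.e.\ to a proper $3$-coloring, and (B) any two proper $3$-colorings are $K^4$-equivalent. Together these place every $4$-coloring in one Kempe class, which is exactly $\mbox{Kc}(G,4)=1$. The recurring elementary move is this: if a vertex $u$ sees at most two colors on its neighborhood, then some color $a\neq\varphi(u)$ is absent from $N(u)$, so $\{u\}$ is by itself a Kempe chain in colors $\{\varphi(u),a\}$, and the corresponding $K$-change recolors $u$ to $a$ while leaving every other vertex fixed. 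Note also that if $\varphi(u)=4$ then all neighbors of $u$ lie in $\{1,2,3\}$, so at most three colors occur around $u$.

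For Claim~(A) I would induct on the number of vertices colored $4$. If some such vertex sees at most two of the colors $1,2,3$, the elementary move eliminates it. The essential case is when every vertex colored $4$ sees all three of $1,2,3$ on its neighborhood; such a vertex has degree at least $3$. Choosing one, say $u$, I would run the classical Kempe-chain argument used in the five-color theorem: among the two-colored chains passing through the neighbors of $u$, planarity guarantees (via the fact that two chains drawn in the disk around $u$ cannot cross) that suitable chains through differently placed neighbors are distinct, so a single swap can be performed after which $N(u)$ misses one of $1,2,3$. The elementary move then recolors $u$ out of color $4$, decreasing the count. This is the step where planarity is genuinely used; degeneracy alone is insufficient, since a $3$-colorable planar graph may be $4$-regular (for instance the octahedron), so there need be no vertex of degree less than $4$ to peel off.

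For Claim~(B) the spare color is the key resource: starting from any proper $3$-coloring, color $4$ never appears on a neighbor, so the elementary move lets me recolor any single vertex to $4$, and the difficulty is only in bringing vertices back to $\{1,2,3\}$ once their neighborhoods are arranged. Using Lemma~\ref{lem:three} to dispose of vertices of degree at most $3$ (so that the restricted problem on $G-v$ feeds the induction), the remaining obstruction is again confined to vertices of degree $4$ and $5$, where I would reuse the planar Kempe-chain swap of Claim~(A), now deployed to reroute one $3$-coloring toward another rather than to erase color $4$. Combining (A) and (B) yields the statement.

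The main obstacle, common to both claims, is the planar Kempe-chain argument: one must select the chain to swap so that it actually frees the intended color at the vertex under consideration \emph{and} does not reintroduce the same obstruction at another vertex, and the proof that the relevant chains are genuinely distinct rests on non-crossing in the plane. I note a tempting shortcut that I do not expect to work cleanly: one could invoke Meyniel's theorem that the $5$-colorings of a planar graph form a single Kempe class \cite{meyniel1} and then try to ``delete'' the fifth color using $3$-colorability. However, converting a $K^5$-sequence between two colorings that both avoid color $5$ into a $K^4$-sequence is not automatic, since intermediate colorings in the sequence may genuinely require the fifth color; so I would fall back on the direct elimination argument above.
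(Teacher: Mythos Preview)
The paper does not prove this lemma (it is cited from Mohar~\cite{mohar1}), but your direct argument has a genuine gap in Claim~(A). The step ``run the classical Kempe-chain argument used in the five-color theorem'' does not go through. In the five-color proof one swaps a chain whose color pair is \emph{disjoint} from that of the separating cycle (a $(2,4)$-chain cannot meet a $(1,3)$-cycle), and that disjointness is exactly what lets planarity bite. Here $u$ is colored~$4$ and you must free one of $1,2,3$ on $N(u)$; every candidate chain lies in a pair from $\{1,2,3\}$, and any two such pairs share a color, so the chains can meet at a vertex of the shared color and leak across the would-be separating cycle. This is precisely the Kempe tangle that wrecked Kempe's original attempt at the Four Color Theorem, and you have no upper bound on $\deg(u)$ to fall back on either (you only know $\deg(u)\ge 3$, and among the vertices colored~$4$ there need not be one of small degree). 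The same objection undermines the part of Claim~(B) that ``reuses the planar Kempe-chain swap.''

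Mohar's actual argument, whose ingredients appear in this paper as Proposition~\ref{prop1} and Proposition~\ref{lem:34}, sidesteps the tangle by a different mechanism: reduce to a $3$-colorable triangulation, then apply Fisk's nonsingular-edge technique (Lemma~\ref{lem:fisk}). One finds a cycle of nonsingular edges all of color $\{a,4\}$ bounding a region and swaps the two colors of $\{1,2,3\}\setminus\{a\}$ inside it; now the swapped pair \emph{is} disjoint from the boundary colors $\{a,4\}$, so the boundary is a genuine barrier and the nonsingular-edge count strictly drops. Iterating reaches the unique $3$-coloring of the triangulation, which handles your Claims~(A) and~(B) simultaneously.
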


Let $G$ be a planar graph, and let $S \subseteq V(G)$. Two $4$-colorings $c_1$, $c_2$ of $G$ are said to be \emph{$K_S$-equivalent} if there exists a sequence of $K$-changes from $c_1$ to $c_2$ that does not change the color of a vertex in $S$ to color $4$. We have the following proposition. 

\begin{proposition}\label{prop1}
Let $G = (V, E)$ be a $3$-colorable triangulation of the plane. Then every $4$-coloring  of $G$ is $K_V$-equivalent to the (unique) $3$-coloring of $G$. 
\end{proposition}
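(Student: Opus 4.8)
The plan is to induct on the number of vertices to which $c$ assigns colour $4$. First I would record what the constraint defining $K_V$-equivalence actually permits. Since $G$ is a $3$-colorable triangulation, every vertex has even degree, so $G$ is Eulerian and its proper $3$-coloring $\gamma : V \to \{1,2,3\}$ is unique up to permuting the three colours; write $A_i = \gamma^{-1}(i)$. A $K$-change is $K_V$-admissible precisely when it never creates a new vertex of colour $4$, and since the colour classes are independent this leaves exactly two kinds of moves available: first, an arbitrary Kempe swap in two colours $\{a,b\} \subseteq \{1,2,3\}$, which never touches colour $4$; and second, recolouring a single vertex $v$ with $c(v) = 4$ to a colour of $\{1,2,3\}$ that is absent from $N(v)$ (the singleton $\{a,4\}$-chain). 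The whole argument is about turning an arbitrary colouring into $\gamma$ using only moves of these two types, which I will call type (i) and type (ii).

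For the base case, suppose $c$ uses no colour $4$, so $c$ is one of the (exactly six) proper $3$-colourings of $G$. I would show these are all $K_V$-equivalent by realizing every transposition in $S_3$ through a single move of type (i). The transposition $(a\,b)$ is obtained by swapping colours $a,b$ on the whole bichromatic subgraph $G[A_a \cup A_b]$, provided that subgraph is connected. Connectivity follows from planarity and the triangulation hypothesis: given $x,y \in A_a \cup A_b$, take any $x$--$y$ path in $G$ and, whenever it passes through a vertex $z \in A_c$ (the third class), reroute it along the link cycle of $z$, which is an even cycle alternating between colours $a$ and $b$ and hence lies entirely in $G[A_a \cup A_b]$. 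Thus $G[A_a \cup A_b]$ is connected, the three transpositions are available, and every $3$-colouring is $K_V$-equivalent to $\gamma$.

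For the inductive step, fix $v$ with $c(v)=4$. If some colour of $\{1,2,3\}$ is missing from $N(v)$, a move of type (ii) strictly lowers the number of $4$'s and induction finishes the job. The remaining case is that the link $L_v$ of every $4$-coloured vertex already carries all three colours, so no type-(ii) move is immediately available; here the goal is to use type-(i) swaps to free a colour in the neighbourhood of some $4$-vertex. The guiding picture is the one from the base case: since $\gamma$ assigns $v$ a single colour and therefore two-colours $L_v$, it suffices to rearrange the $3$-coloured part of the graph, using bichromatic swaps, until the facial cycle $L_v$ of the near-triangulation $G-v$ becomes two-coloured, after which $v$ can be recoloured.

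I expect this last point to be the main obstacle. Unlike in the base case, the presence of the other $4$-coloured vertices obstructs a direct reroute, since a bichromatic $\{a,b\}$-chain that one would like to flip near $v$ may be pinned by $4$-coloured vertices elsewhere, and flipping it can simultaneously help at $v$ and hurt at another $4$-vertex. The triangulation hypothesis is exactly what should rescue the argument: I would choose $v$ and a colour $a$ extremally, for instance so that the $\{a,4\}$-component containing $v$ is innermost in the planar embedding, and then use the link-cycle rerouting to show that within that component some $4$-vertex can, after finitely many type-(i) swaps, be stripped of one of its neighbouring colours. Once a single $4$ is removed the induction hypothesis applies to the resulting colouring, and composing with the base case yields $K_V$-equivalence to $\gamma$. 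As a sanity check, Lemma~\ref{lem:mohar} already guarantees that $c$ and $\gamma$ are Kempe-equivalent \emph{without} the colour-$4$ restriction, so the entire content of the proposition lies in upgrading such a sequence to a $K_V$-admissible one.
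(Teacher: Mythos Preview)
Your framework is sound up to the point you flag as the main obstacle, and that obstacle is real: the inductive step in the case where every $4$-coloured vertex has all of $\{1,2,3\}$ on its link is the entire content of the proposition, and you do not prove it. The sentence ``choose $v$ and a colour $a$ extremally, for instance so that the $\{a,4\}$-component containing $v$ is innermost $\ldots$ and then use the link-cycle rerouting'' is a hope, not an argument. Link-cycle rerouting established connectivity of bichromatic subgraphs in your base case; it is not clear what it is meant to accomplish here, and an innermost $\{a,4\}$-Kempe chain is not by itself the right object --- such a chain can contain both singular and nonsingular edges, and swapping the other two colours on one side of it need not make progress.

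The paper's proof supplies exactly the missing structural fact, due to Fisk: the \emph{nonsingular} edges of colour $\{a,4\}$ contain a cycle bounding a region $D$ of the plane. Because every vertex on that cycle is coloured $a$ or $4$, swapping the two colours of $\{1,2,3\}\setminus\{a\}$ throughout the interior of $D$ is a union of type-(i) moves; it turns every edge of the bounding cycle from nonsingular to singular while preserving the status of all other edges. The paper therefore inducts on the number of nonsingular edges rather than on the number of $4$-vertices. Note that the Fisk move does not remove any $4$-vertex, so your chosen induction variable would not decrease under it; you would at best need a lexicographic measure. Either adopt the nonsingular-edge count as your progress measure and invoke Fisk's lemma, or replace your sketch with a genuine argument that an innermost choice forces a type-(ii) move after finitely many type-(i) swaps --- as written, that claim is unsupported.
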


The proof of Proposition \ref{prop1} follows, nearly word by word, from the proof of Theorem 1 in \cite{fisk} by Fisk. We include some of the details for the convenience of the reader. First, we require some definitions and an auxiliary lemma. 

Let $f$ be a $4$-coloring of a triangulation $G$, and let $e =xy$ be an edge of $G$. By the color of $e$ under $f$, we mean $\{f(x), f(y)\}$.   Two triangles $xyz$ and $xyw$ contain the edge $e$. If $f(w) = f(z)$ then $e$ is called \emph{singular}, and if $f(w) \not=f(z)$ then $e$ is \emph{nonsingular}. We require the following result of Fisk extracted from the proof of Theorem 1 in \cite{fisk}. 

\begin{lemma}[Fisk~\cite{fisk}]\label{lem:fisk}
Let $G$ be a $3$-colorable triangulation of the plane, and let $f$ be a $4$-coloring of $G$. If $G$ contains a nonsingular edge $e$ under $f$ and $S$ denotes the set of nonsingular edges of $G$ having the same color as $e$, then $S$ contains a cycle that bounds some region of the plane.  
\end{lemma}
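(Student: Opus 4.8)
The plan is to prove that $S$ is a nonempty subgraph of $G$ in which every vertex has even degree; since such a subgraph always contains a cycle, and every cycle in a plane graph is a simple closed curve bounding a region, this yields the lemma (note $e\in S$, so $S\neq\emptyset$). The structural fact that makes this work is that a $3$-colorable triangulation of the plane is Eulerian. Indeed, fixing a proper $3$-coloring $g$ of $G$, the neighbours of any vertex $v$ all receive colors other than $g(v)$, hence only the two remaining colors; as these neighbours form the link cycle of $v$, that cycle is properly $2$-colored and therefore of even length. Thus $\deg_G(v)$ is even for every $v$.

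Write $\{a,b\}$ for the color of $e$ and set $\{c,d\}=\{1,2,3,4\}\setminus\{a,b\}$. I first record a local description of $S$. If $xy$ is any edge of color $\{a,b\}$, lying in triangles $xyz$ and $xyw$, then $z$ and $w$ are each adjacent to both an $a$- and a $b$-colored vertex, so $f(z),f(w)\in\{c,d\}$, and by definition $xy\in S$ precisely when $f(z)\neq f(w)$. Now fix a vertex $v$ and let $u_1u_2\cdots u_d$ be its link cycle (so $d=\deg_G(v)$ is even and consecutive $u_i$ have distinct colors). If $f(v)\notin\{a,b\}$, then no edge at $v$ has color $\{a,b\}$ and $\deg_S(v)=0$. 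Otherwise, by the symmetry interchanging $a$ and $b$, we may assume $f(v)=a$; then the edges of $S$ incident to $v$ are exactly those $vu_i$ with $f(u_i)=b$ for which the two link-neighbours $u_{i-1},u_{i+1}$ of $u_i$ satisfy $f(u_{i-1})\neq f(u_{i+1})$ (both colors lying in $\{c,d\}$). Hence $\deg_S(v)=N$, where $N:=\#\{i: f(u_i)=b \text{ and } f(u_{i-1})\neq f(u_{i+1})\}$.

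The crux is to show $N$ is even. The vertices of the link colored $b$ are pairwise non-consecutive and split the cycle into $k$ arcs whose vertices are colored from $\{c,d\}$, with one $b$-vertex between consecutive arcs. On each arc consecutive colors differ, so the colors alternate strictly between $c$ and $d$. Deleting the $b$-vertices leaves a cyclic sequence of $c$'s and $d$'s, in which the number of color-changes is even. These changes split into two kinds: those internal to an arc, of which an arc of length $\ell$ contributes $\ell-1$, and those straddling a deleted $b$-vertex, of which there are exactly $N$. Therefore $N$ equals an even number minus $\sum_{\text{arcs}}(\ell-1)=d-2k$, so $N\equiv d\equiv 0\pmod 2$. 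The case $f(v)=b$ is identical with the roles of $a$ and $b$ exchanged, so $\deg_S(v)$ is even for every vertex $v$.

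It follows that $S$ is a nonempty graph all of whose degrees are even: starting at $e$ and repeatedly leaving each reached vertex along a not-yet-used edge of $S$, one never gets stuck and must eventually revisit a vertex, whence $S$ contains a cycle $C$. As a simple closed curve in the plane, $C$ bounds a region, which proves the lemma. I expect the main difficulty to be the parity computation on the link, and in particular the observation that $3$-colorability is used essentially: without the Eulerian property the count satisfies only $N\equiv d\pmod 2$, which can be odd (for instance in $K_4$, where the single nonsingular edge of a given color forms no cycle at all).
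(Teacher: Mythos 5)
The paper gives no proof of this lemma: it is quoted from Fisk~\cite{fisk} and used as a black box in the proof of Proposition~\ref{prop1}, so there is no internal argument to compare yours against --- what your write-up does is make this step of the paper self-contained. Your proof is correct. The chain of reductions is sound: (i) a $3$-colorable plane triangulation is Eulerian, since each link cycle avoids the color of its central vertex, is therefore properly $2$-colored, and hence has even length; (ii) every vertex has even degree in $S$: when $f(v)=a$, the edges of $S$ at $v$ are exactly those $vu_i$ with $f(u_i)=b$ whose link neighbours $u_{i-1},u_{i+1}$ (necessarily colored from $\{c,d\}$) receive distinct colors, and your parity bookkeeping --- the color changes around the cyclic $\{c,d\}$-sequence obtained by deleting the $b$-vertices are even in number and total $N+\sum_{\mathrm{arcs}}(\ell-1)=N+d-2k$ --- correctly yields $N\equiv d\equiv 0\pmod 2$; (iii) a nonempty graph with all degrees even contains a cycle; (iv) by the Jordan curve theorem any cycle in a plane graph bounds a region, so that clause of the statement is automatic and the real content is the existence of a cycle in $S$. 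Your $K_4$ example correctly isolates where $3$-colorability is essential (without the Eulerian property one only gets $N\equiv d\pmod 2$). Two trivial points you could make explicit when polishing: the case where the link of $v$ contains no $b$-colored vertex (then $N=0$ and the arc decomposition is vacuous), and the standing assumption $|V(G)|\geq 4$, needed so that every edge lies in two distinct triangles and every link is a cycle (for $G=K_3$ the lemma is vacuous, as every edge is singular). What your route buys over the paper's treatment is transparency: an elementary, self-contained argument exhibiting the lemma as the statement that the nonsingular edges of a fixed color form an even subgraph of an Eulerian triangulation; what the paper buys by citing Fisk is brevity.
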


\begin{proof}[Proof of Proposition \ref{prop1}]
Given a $4$-coloring $f$ of $G$, we show how to obtain a new coloring $g$ that is $K_V$-equivalent to $f$ and with fewer nonsingular edges. Note that each edge in the unique $3$-coloring of $G$ is singular, and thus, by repetition, this will prove the proposition. 

We first show that for every $v \in V(G)$ with $f(v) = 4$, either $v$ is incident with a nonsingular edge or we may change the color of $v$ to a color distinct from $4$ by exhibiting a trivial K-change. To see this, note that since $G$ is a triangulation and is $3$-colorable, the neighborhood of $v$ induces an even cycle $C$. If $|f(C)| = 2$, i.e., there is a color $a \in \{1, 2, 3\}$ not used by $f$ on $C$, then we may change the color of $v$ to $a$. Otherwise, $f(C) = \{1, 2, 3\}$ and, by the pigeonhole principle, some edge incident with $v$ is nonsingular, as needed. 

To prove the proposition, we can thus safely assume that there is a nonsingular edge $e$ under $f$ having color $\{a, 4\}$ for some $a \in \{1, 2, 3\}$. By Lemma \ref{lem:fisk}, there is a cycle in $G$ whose edges have the same color as $e$ and bounding some region $D$ of the plane.  
By interchanging the two colors in $\{1,2, 3\} \setminus \{a\}$ in  the interior of $D$, we obtain a new coloring $g$ with fewer nonsingular edges than $f$ (the edges in the interior of $D$ stay singular or nonsingular, while the edges of the cycle on the boundary of $D$ change from nonsingular to singular). This completes the proof. 
\end{proof}

We will also need the following result of Mohar. A plane graph is a \emph{near-triangulation} if every facial cycle of the graph is a triangle except possibly for the outer cycle. 

\begin{proposition}[Mohar~{\cite[Proposition 4.3]{mohar1}}]\label{lem:34}
Suppose that $G$ is a planar graph with a facial cycle $C$. If $c_1$, $c_2$ are $4$-colorings of $G$, then there is a near-triangulation $T$ of the plane with the outer cycle $C$ such that $T \cap G = C$ and there are $4$-colorings $c'_1$, $c'_2$ of $G$ that are $K$-equivalent to $c_1$ and $c_2$, respectively, such that $c'_1$ and $c'_2$ can be extended to $4$-colorings of $T \cup G$. Moreover, if the restriction of $c_1$ to $C$ is a $3$-coloring, then $c'_1 = c_1$, and $c_1$ can be extended to a $3$-coloring of $T \cup G$.  
\end{proposition}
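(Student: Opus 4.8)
The plan is to push everything onto the boundary cycle. Because $T \cap G = C$, the internal vertices of $T$ are adjacent to $G$ only through $C$, so a coloring $c'_i$ of $G$ extends to a $4$-coloring of $G \cup T$ if and only if its restriction $c'_i|_C$ extends to a $4$-coloring of the disk triangulation $T$. Thus it suffices to exhibit one near-triangulation $T$ of the closed disk bounded by $C$, together with colorings $c'_1, c'_2$ that are $K$-equivalent to $c_1, c_2$, so that each $c'_i|_C$ extends over $T$; and we must arrange $T$ to be $3$-colorable, with $c_1$ itself extending to a $3$-coloring, whenever $c_1|_C$ already uses at most three colors.

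For a single boundary coloring $d$ of $C = v_1 \cdots v_m$ I would triangulate greedily: whenever two vertices $v_{j-1}, v_{j+1}$ at distance two along the current boundary carry distinct colors, insert the (proper) chord $v_{j-1}v_{j+1}$, splitting off the triangle $v_{j-1}v_jv_{j+1}$ and shortening the boundary by one. This stalls only on a cycle whose coloring is $2$-periodic, i.e. an even cycle using exactly two colors, where I insert a single apex joined to that cycle and give it a third color. Every resulting triangle is properly colored, so $d$ always extends; and since the chordal part is a maximal outerplanar graph and each apex meets only a two-colored link, the triangulation is $3$-colorable. Applying this to $d = c_1|_C$ when it uses at most three colors yields a $3$-colorable $T$ into which $c_1$ extends as a $3$-coloring, with no $K$-change applied to $c_1$; this is exactly the ``moreover'' clause. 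The same construction shows that $c_1|_C$ always extends over the $T$ tailored to it, so one may take $c'_1 = c_1$.

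The hard part is to make one and the same $T$ serve $c_2$ as well. A triangulation built for $c_1|_C$ need not admit $c_2|_C$: an apex cannot be completed against a link that already uses all four colors, and the chord pattern forced by one coloring can be improper for the other. This is where the $K$-changes enter. My strategy is to $K$-normalize the two boundary colorings before committing to $T$ --- reducing each $c_i|_C$ to at most three colors, or to a common $3$-coloring pattern on $C$ up to permutation --- and only then to run the construction above, obtaining a single $3$-colorable $T$ over which both restrictions extend; parity issues on even versus odd boundary cycles, and the list-coloring of an inner cycle if an annular layer is inserted to absorb a four-colored link, would be resolved by isolated $K$-changes. Throughout, Lemma~\ref{lem:super} lets me perform these moves in $G \cup T$ and project them back to genuine $K$-equivalences of the restrictions on $G$, and the region-swapping mechanism behind Proposition~\ref{prop1} is the natural engine for shifting colors along the boundary. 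I expect the main obstacle to be precisely this normalization: showing that global $K$-changes in $G$ can force the two prescribed boundary colorings into a common, simultaneously extendable form, while leaving $c_1$ untouched and $T$ three-colorable in the case that $c_1$ restricts to a $3$-coloring of $C$.
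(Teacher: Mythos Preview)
This proposition is not proved in the paper; it is quoted from Mohar~\cite{mohar1} and used as a black box, so there is no in-paper argument to compare your sketch against.

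On its own merits, what you have written is a plan rather than a proof. The single-coloring construction (greedy ear-removal, with an apex over a two-colored even link) is correct and does produce a $3$-colorable disk triangulation extending any prescribed proper boundary coloring; that cleanly handles the ``moreover'' clause. But the substance of the proposition is making \emph{one} $T$ serve both $c_1$ and $c_2$, and here you explicitly leave the decisive step --- the ``$K$-normalization'' of $c_1|_C$ and $c_2|_C$ to a common extendable pattern --- as an expectation, not an argument. Your proposed tools for it are also circular. Lemma~\ref{lem:super} projects $K$-equivalences from a supergraph down to $G$, but to invoke it for $c_2$ you would need $c_2$ already extended to $G\cup T$, which is precisely what you are trying to establish; you cannot perform $K$-changes of $c_2$ inside $G\cup T$ when $c_2$ is not yet a coloring of that graph. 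Likewise the region-swapping behind Proposition~\ref{prop1} operates inside a fixed $3$-colorable triangulation carrying a global $4$-coloring; it does not move a boundary coloring of $G$ by $K$-changes performed in $G$. The normalization, if it is to work, must be carried out entirely inside $G$, using only planarity and the facial position of $C$, and nothing in the outline indicates how to do that.
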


\section{The proof of Theorem \ref{thm:main}}

In this section, we prove Theorem \ref{thm:main}. We begin with some lemmas.

\begin{lemma}[Koester~\cite{koester}]\label{lem:four}
Every $4$-critical planar graph has a vertex of degree at most~$4$. 
\end{lemma}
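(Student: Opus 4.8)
The plan is to argue by contradiction, ruling out the possibility that every vertex has degree at least $5$. First I would record the two easy bounds that sandwich the minimum degree. On the one hand, $G$ being $4$-critical forces $\delta(G) \ge 3$: a vertex $v$ of degree at most $2$ could be deleted, the resulting proper subgraph $3$-coloured by criticality, and the colouring extended to $v$ since fewer than $3$ colours appear on its neighbours, contradicting the non-$3$-colourability of $G$. On the other hand, Euler's formula gives $|E(G)| \le 3|V(G)| - 6$, so the average degree is below $6$ and hence $\delta(G) \le 5$. Thus, if the lemma failed, we would have a $4$-critical planar graph with $\delta(G) = 5$, and the entire task reduces to excluding this case.

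Next I would emphasise that Euler's formula alone cannot do this, since planar graphs of minimum degree $5$ do exist (the icosahedron is the standard example); a purely combinatorial counting or discharging argument that ignores colourings is therefore doomed. The extra leverage must come from the full strength of $4$-criticality, and the precise consequence I would exploit is that for every edge $uv$ the graph $G - uv$ is $3$-colourable, while $G$ is not, so \emph{every} $3$-colouring of $G - uv$ assigns $u$ and $v$ the same colour. Each edge is, in this sense, colour-forcing, and this is the constraint that a minimum-degree-$5$ configuration will have to violate.

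The substantive step would then combine discharging with this colour-forcing property. Assigning charge $\deg(v) - 6$ to each vertex and $2\deg(f) - 6$ to each face yields total charge $-12$; after designing suitable redistribution rules, the negative total forces an \emph{unavoidable} local configuration, namely a degree-$5$ vertex $x$ sitting in a controlled neighbourhood (for instance incident to enough triangles and to neighbours of small degree, as guaranteed by light-configuration phenomena in minimum-degree-$5$ planar graphs). I would then argue that such a configuration is \emph{reducible}: by deleting a suitable edge or the vertex $x$ and invoking $3$-colourability of the proper subgraph, one either extends or merges colourings into a $3$-colouring of $G$, or produces a $3$-colouring of some $G - uv$ in which $u$ and $v$ receive different colours. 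Either outcome contradicts $4$-criticality, and so $\delta(G) = 5$ is impossible.

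The main obstacle is exactly this reducibility step. Handling a degree-$5$ vertex cannot be done by the one-line extension argument that settles degrees $\le 2$, because once five neighbours are present the available colours can be exhausted, and one is forced into a case analysis of the cyclic arrangements of colours around $x$ and its neighbours — the same difficulty, in miniature, that makes the Four Colour Theorem hard. Concretely, I expect to have to enumerate the $3$-colourings of the link of $x$ and show that at least one is compatible with deleting $x$ and recolouring, which is where the planar embedding and the colour-forcing constraints must be married carefully. A cleaner alternative, which I would ultimately prefer, is to follow Koester's original argument, tailored specifically to the $4$-critical planar setting rather than routed through generic discharging.
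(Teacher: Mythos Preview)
The paper does not prove this lemma at all: it is stated with a citation to Koester and used as a black box. So there is no ``paper's own proof'' to compare against; your write-up is a proof \emph{plan}, not something the paper chose to replace with a shorter or different argument.

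As a plan, your skeleton is the right one (pin $\delta(G)$ between $3$ and $5$, then exclude $\delta(G)=5$ using criticality rather than Euler alone), and you correctly diagnose that the whole difficulty is the reducibility of a degree-$5$ vertex in a $4$-critical planar graph. But that step is not actually carried out: you describe a discharging-plus-reduction template, name the obstacle, and then defer to ``Koester's original argument'' without executing either. In particular, the generic ``delete an edge or the vertex $x$ and extend/merge a $3$-colouring'' sketch is exactly where the argument can fail if one is not careful --- a degree-$5$ vertex with all three colours on its neighbours does not recolour by a one-step Kempe-style swap, and the colour-forcing property of each edge does not by itself hand you a contradiction without a concrete local analysis. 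So what you have is a correct outline with the load-bearing lemma left as an IOU; to turn it into a proof you would need to either reproduce Koester's reduction or supply the specific unavoidable configuration and its reducibility in full.
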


For a graph $G$, a subgraph $H$ of $G$ and a coloring $\varphi$ of $G$, let $\varphi \restriction H$ denote the restriction of $\varphi$ to $H$. For a vertex $v$ of $G$ of degree exactly $4$ and a $4$-coloring $f$ of $G$, we say that $f$ is \emph{$v$-good} if $f$ colors exactly two neighbors of $v$ alike; in other words, if all four colors appear in the closed neighborhood of $v$ under $f$.

\begin{lemma}\label{lem:good}
Let $G$ be a $4$-critical planar graph, let $v$ be a vertex of degree exactly four in $G$, and let $\varphi$ be a $4$-coloring of $G$. Then $\varphi$ is $K$-equivalent to a $v$-good $4$-coloring $\psi$ of $G$. 
\end{lemma}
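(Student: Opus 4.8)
The plan is to transport a Kempe equivalence that already holds in $G-v$ up to $G$, treating $v$ as a ``movable'' obstruction and exploiting the fact that the only configurations in which $v$ \emph{cannot} be moved out of the way are themselves already $v$-good. Since $G$ is $4$-critical, $G-v$ is $3$-colorable, and being planar it satisfies $\mbox{Kc}(G-v,4)=1$ by Lemma~\ref{lem:mohar}. I would fix a proper $3$-coloring $c_0$ of $G-v$ using colors $\{1,2,3\}$. Then $\varphi|_{G-v}$ and $c_0$ are $K$-equivalent in $G-v$, so there is a sequence of Kempe changes $S_1,\dots,S_m$ taking $\varphi|_{G-v}$ to $c_0$; write $\varphi_0=\varphi|_{G-v},\varphi_1,\dots,\varphi_m=c_0$ for the intermediate colorings. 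The goal is to lift this sequence to Kempe changes in $G$ while maintaining the invariant that the current coloring of $G$ is proper and restricts to $\varphi_i$ on $G-v$, allowing the color of $v$ to drift as needed.

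The heart of the argument is the case analysis for lifting a single change $S_i$, an $\{a,b\}$-change on a chain $K\subseteq V(G-v)$, when the current color $d$ of $v$ is known. If $d\notin\{a,b\}$, then $v$ does not lie in the $\{a,b\}$-subgraph of $G$, so that subgraph coincides with the $\{a,b\}$-subgraph of $G-v$ and $K$ is still a chain of $G$ avoiding $v$; I perform the same change, leaving $v$ fixed. If $d\in\{a,b\}$, say $d=a$, and no neighbor of $v$ is colored $b$, then $v$ is isolated in the $\{a,b\}$-subgraph, so $K$ is again unaffected and the change is performed directly. If some neighbor is colored $b$ but some color $c\in\{1,2,3,4\}\setminus\{a,b\}$ is missing from the neighborhood of $v$, I first recolor $v$ to $c$ (a legitimate one-vertex Kempe change on colors $\{a,c\}$, since no neighbor is colored $c$) and reduce to the first case.

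The only situation not covered is that both colors of $\{1,2,3,4\}\setminus\{a,b\}$ and the color $b$ all appear on the neighbors of $v$; but then all three colors distinct from $a$ occur on the neighbors, so the current coloring is \emph{already} $v$-good and I stop. If the process instead runs to completion, the coloring of $G-v$ becomes $c_0$, which avoids color $4$; since $G$ is not $3$-colorable, the neighbors of $v$ must then use all three of $\{1,2,3\}$ (otherwise $v$ could take the missing color and extend $c_0$ to a $3$-coloring of $G$), which forces $v$ to color $4$ and makes the final coloring $v$-good. In either case I reach a $v$-good coloring $K$-equivalent to $\varphi$.

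I expect the main obstacle to be exactly the step where the current color of $v$ lies in $\{a,b\}$: this is the degree-$4$ difficulty that blocks a direct appeal to Lemma~\ref{lem:three}, because with four neighbors one cannot always free a color for $v$ outside $\{a,b\}$. The decisive observation that makes the plan work is that this impossibility is self-defeating — precisely when $v$ admits no recoloring away from $\{a,b\}$, its neighbors already display all three remaining colors, which is the conclusion sought. The routine parts I would still need to verify carefully are that each lifted change preserves properness at the edges incident to $v$ and that recomputing the relevant $\{a,b\}$-chain after recoloring $v$ does not alter $K$.
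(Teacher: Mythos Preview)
Your argument is correct and follows essentially the same route as the paper: both lift a Kempe sequence in $G-v$ toward a $3$-coloring and stop as soon as the neighbourhood of $v$ carries three colours. The paper pre-selects that stopping index $i$ and then invokes ``the same argument as in the proof of Lemma~\ref{lem:three}'' for the lifting up to step~$i$, whereas you carry out the lifting explicitly and observe that the unique obstruction to a further lift is precisely $v$-goodness; your case analysis (in particular the ``$v$ isolated in the $\{a,b\}$-subgraph'' case) is exactly what makes that appeal to Lemma~\ref{lem:three} legitimate despite $\deg(v)=k$, and your concerns about properness at $v$ and about $K$ being unchanged after recolouring $v$ are both easily discharged as you indicate.
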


\begin{proof}
Since $G$ is $4$-critical, the graph $H = G - v$ is $3$-colorable. By Lemma \ref{lem:mohar}, there is a sequence of $K$-changes $\varphi \restriction H = c'_1 \sim_4 \dots \sim_4 c'_m$ in $H$ where $c'_m$ is a $3$-coloring; moreover, $c'_m$ uses all three colors in the neighborhood of $v$ else $G$ would be $3$-colorable. 

Thus, we may choose $t \in \{1, \dots, m\}$ to be the smallest index such that $c'_t$ colors exactly two neighbors of $v$ alike. Now, the same argument as in the proof of Lemma \ref{lem:three} can be applied to show that the sequence of $K$-changes $\varphi \restriction H = c'_1 \sim_4 \dots \sim_4 c'_t$ extends to a sequence of $K$-changes from $\varphi$ to some $4$-coloring $\varphi'$ of $G$ such that $\varphi' \restriction H = c'_t$ and so $\psi = \varphi'$  is our required coloring. We repeat the argument here for completeness. 

For $j \in \{1, \dots, t\}$, we extend $c_j'$ to a $4$-coloring $c_j$ of $G$ as follows. Observe that each $c'_{i + 1}$ differs from $c'_i$ by a single $K$-change. We then use the same $K$-change in $G$ to obtain $c_{i+1}$ from $c_i$, unless this $K$-change involves $v$ and there are at least two neighbors of $v$ in the component under consideration. However, by hypothesis, $v$ has at least three neighbors colored alike and has degree $4$ and so there is a color $a$ not appearing in its closed neighborhood. Thus, by first preceding this $K$-change by changing the color of $v$ to $a$ (via a trivial $K$-change), the result follows.   
\end{proof}

\begin{lemma}\label{lem:final}
Let $G$ be a $4$-critical plane graph. Let $v$ be a vertex of degree exactly four in $G$. Let $c_1, c_2$ be $v$-good $4$-colorings of $G$ such that $c_1 \restriction (G - v)$ is a $3$-coloring. Then $c_2$ is $K$-equivalent to $c_1$. 
\end{lemma}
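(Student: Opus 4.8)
The plan is to transport the $K_V$-reduction of Proposition~\ref{prop1} for a $3$-colorable triangulation back to $G$, using color $4$ as a color reserved for $v$. The guiding principle is that a recoloring sequence which never assigns color $4$ to a neighbor of $v$ can be lifted past $v$, so the whole problem reduces to producing such a sequence inside $G-v$.

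First I would normalize $c_2$ so that $v$ gets color $4$. Since $c_2$ is $v$-good, $v$ has some color $t$ and no neighbor of $v$ has color $t$. Performing the single $K$-change on the Kempe chain in colors $\{t,4\}$ containing $v$ turns $v$ into color $4$; as every neighbor of $v$ colored $4$ is adjacent to $v$ and hence lies in this chain, after the swap no neighbor of $v$ is colored $4$. Call the result $c_2'$: it is proper, colors $v$ with $4$, and colors all neighbors of $v$ in $\{1,2,3\}$. Since $c_1$ already has this form (its restriction to $G-v$ is a $3$-coloring, forcing $v\mapsto 4$), it suffices to show that $c_2'$ is $K$-equivalent to $c_1$.

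Write $H=G-v$, which is $3$-colorable because $G$ is $4$-critical. The key observation is a lifting principle: any $K$-change in $H$ that does not recolor a neighbor of $v$ to color $4$ induces a $K$-change in $G$ fixing the color of $v$. Indeed, if the chain uses a color pair not containing $4$, then $v$ (colored $4$) is not in it and properness at $v$ is preserved; and if the pair is $\{a,4\}$ but no neighbor of $v$ is recolored to $4$, then—as no neighbor of $v$ is colored $4$—no neighbor of $v$ lies in the chain, so the $\{a,4\}$-component of $v$ in $G$ is disjoint from it and $v$ is untouched. Hence it suffices to prove that $c_2'\restriction H$ and $c_1\restriction H$ are $K_S$-equivalent in $H$ for $S=N(v)$, that is, joined by $K$-changes never assigning color $4$ to a neighbor of $v$. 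To produce such a sequence I would use Proposition~\ref{lem:34} to fill every face of the $3$-colorable plane graph $H$ with a compatible near-triangulation, obtaining a $3$-colorable triangulation $T$ of the plane with $H\subseteq T$ for which the $3$-coloring $c_1\restriction H$ extends to the (unique) $3$-coloring of $T$ (by the ``moreover'' clause, since every facial restriction of $c_1\restriction H$ avoids color $4$ and is thus a $3$-coloring), while some $4$-coloring $K$-equivalent to $c_2'\restriction H$ extends to $T$. Proposition~\ref{prop1} then makes both extensions $K_V$-equivalent in $T$ to its $3$-coloring, via $K$-changes never coloring any vertex of $T$ with $4$; restricting this sequence to $H$ through Lemma~\ref{lem:super}, and noting that restrictions inherit the property of never coloring a vertex of $V(H)\supseteq N(v)$ with $4$, yields the desired $K_{N(v)}$-sequence from $c_2'\restriction H$ to $c_1\restriction H$. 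Lifting it to $G$ by the principle above produces a $K$-sequence from $c_2'$ to a coloring agreeing with $c_1$ on $H$ and sending $v\mapsto 4$, namely $c_1$.

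The main obstacle is the construction of $T$ together with the certification that color $4$ is avoided on $N(v)$ throughout. The delicate point is the face of $H$ that contained $v$: its boundary carries $N(v)$, so I must fill it (and every other face, which may be bounded by a closed walk rather than a cycle) so that $c_1\restriction H$ still extends to a $3$-coloring and $c_2'\restriction H$ can be $K$-moved to extend, all while ensuring that the $K$-moves furnished by Proposition~\ref{lem:34} for $c_2'$ never assign color $4$ to a neighbor of $v$ (so that they too lift). Deleting $v$ is what makes this tractable, since it removes the need to keep a vacant region for $v$ and lets this face be treated like any other; carrying the color-$4$ avoidance through Proposition~\ref{lem:34} and the restriction step is where the real work lies.
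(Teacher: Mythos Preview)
Your overall strategy matches the paper's: triangulate, apply Proposition~\ref{prop1} to get a $K_V$-sequence down to the $3$-coloring, then restrict back and lift past $v$ using that color~$4$ is reserved. But the gap you yourself flag at the end is real and is precisely the crux of the lemma. When you invoke Proposition~\ref{lem:34} on the faces of $H=G-v$, the $K$-moves it produces to make $c_2'\restriction H$ extendible are arbitrary $K$-moves in $H$; nothing in that proposition prevents them from assigning colour~$4$ to a vertex of $N(v)$, and such moves do not lift to $G$ by your own principle. There is no ``colour-$4$-avoiding'' version of Proposition~\ref{lem:34} to appeal to, so as written the argument stalls exactly where you say ``the real work lies''. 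A secondary issue is that faces of $G-v$ need not be bounded by cycles, so Proposition~\ref{lem:34} is not even directly applicable.

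The paper closes this gap by reordering the construction. It keeps $v$ in the graph while triangulating: first it augments $G$ to a planar $G^*$ by adding edges or auxiliary degree-$3$ vertices around $v$ so that both $c_1,c_2$ extend, $c_1^*\restriction(G^*-v)$ stays a $3$-coloring, and the link of $v$ becomes a cycle of length at most~$6$; then it applies Proposition~\ref{lem:34} only to the faces \emph{not} incident with $v$. The resulting $K$-moves live in supergraphs that still contain $v$, so no lifting is required. Only afterwards is $v$ deleted, yielding a near-triangulation $H=G_m-v$ with a short outer cycle $C$. The remaining face (the one that carried $v$) is then handled by an explicit finite case analysis (the Claim): one adds vertices and edges \emph{outside} $C$ to obtain a triangulation $N\supseteq H$ to which both $c_1'$ and $c_2'$ extend \emph{directly}, with no further $K$-moves on $c_2'$ at all. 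This construction---including a doubling trick when $C$ has length~$4$ and the colourings fall into the bad pattern of Figure~\ref{fig:bad}, and a chord-splitting analysis for $|C|\in\{5,6\}$---is the substantive content your sketch is missing.
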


\begin{proof}
We may assume that $v$ is on the outer face of $G$. Let $v_0, \dots, v_3$ denote the neighbors of $v$ in this clockwise order around $v$. We begin by constructing a new graph $G^*$ from $G$ as follows. 

 Initially, we set $G^* = G$. For $i \in \{0,1, 2, 3\}$, if $v_iv_{i + 1} \not\in E(G)$ (where addition is taken modulo $4$) and $c_j(v_i) \not= c_j(v_{i + 1})$ for each $j \in \{1, 2\}$, then we add the edge $v_iv_{i+1}$ to $G^*$. If $v_iv_{i + 1} \not\in E(G)$ but $c_j(v_i) = c_j(v_{i + 1})$ for some $j \in \{1, 2\}$, then we add to $G^*$ a new vertex $u_{i, i+1}$ adjacent to $v_i, v_{i + 1}, v$. 
 
 It is not difficult to see that $G^*$ is planar, and that $c_1, c_2$ extend to $4$-colorings $c_1^*, c_2^*$ of $G^*$ such that $c_1^* \restriction (G^* - v)$ is a $3$-coloring; moreover, if $v_iv_{i + 1} \not\in E(G)$ and $c_1(v_i) = c_1(v_{i + 1})$, then, by the $v$-goodness of $c_2$, there is a vertex $z$  where either $z = v_{i-1}$ and $zv_{i} \in E(G^*)$ or $z = v_{i+2}$  and $zv_{i + 1} \in E(G^*)$. In this case, we set 
 \begin{equation}\label{eq:1}
 c^*_1(u_{i, i+1}) = c^*_1(z).
 \end{equation}   
 
\begin{claim}
There is a near-triangulation $M$ of the plane containing $G^*$ with a $4$-coloring $f$ such that $f \restriction (M - v)$ is a $3$-coloring and $c_1^*$ is $K$-equivalent to $f \restriction G^*$.
\end{claim}

\begin{proof}

 Let $C_1, \dots, C_m$ be the facial cycles of $G^*$ not incident with $v$. Let $G_0 = G^*$, $c_1^0 = c_1^*$ and $c_2^0 = c_2^*$. For $i \in \{1, \dots, m\}$ we apply Proposition \ref{lem:34} to the graph $G_{i - 1}$, its facial cycle $C_i$  and the colorings $c_1^{i-1}$ and $c_2^{i-1}$. We conclude that there is a near-triangulation $T_i$ with outer cycle $C_i$ such that $G_{i-1} \cap T_i = C_i$. Let $G_i = G_{i-1} \cup T_i$. By Proposition \ref{lem:34}, $c_1^{i-1}$ can be extended to a $3$-coloring $c_1^i$ of $G_i$ and $c_2^{i-1}$ is $K$-equivalent in $G_{i-1}$ to a $4$-coloring that has an extension $c_2^i$ to $G_i$. The final graph $G_m$ is a near-triangulation of the plane, with the $4$-coloring $c_1^m$ such that $c_1^m \restriction (G_m - v)$ is a $3$-coloring, as claimed.
 \end{proof}

Borrowing the notation from the proof of the claim,  let $H = G_m - v$, and assume up to a single $K$-change that $c_2^m(v) = 4$. Note that $H$ is a near-triangulation with the outer face bounded by the cycle $C$ consisting of the neighbors of $v$ in $G_m$.  For $i \in \{1, 2\}$, let $c'_i = c_i^m \restriction H$.

\begin{claim} There is a triangulation $N$ of the plane  containing $H$ such that $c_1'$ extends to a $3$-coloring $c_1^N$ of $N$ and $c_2'$ to a $4$-coloring $c_2^N$ of $N$. 
\end{claim}

\begin{figure}
\begin{center}\includegraphics[width=0.4\textwidth]{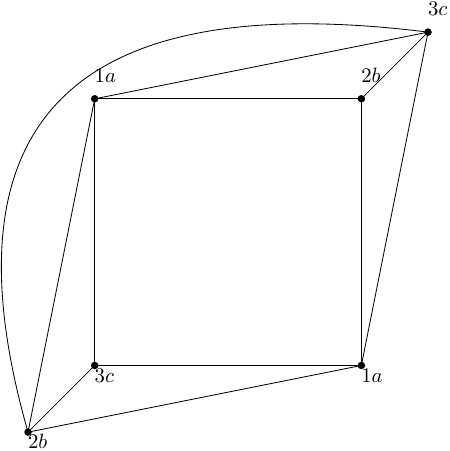}\end{center}
\caption{The first case}\label{fig:case1}
\end{figure}

\begin{figure}
\begin{center}\includegraphics[width=0.3\textwidth]{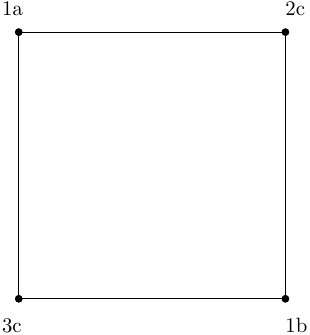}\end{center}
\caption{The special case (up to permutations of colors)}\label{fig:bad}
\end{figure}

\begin{figure}
\begin{center}\includegraphics[width=0.8\textwidth]{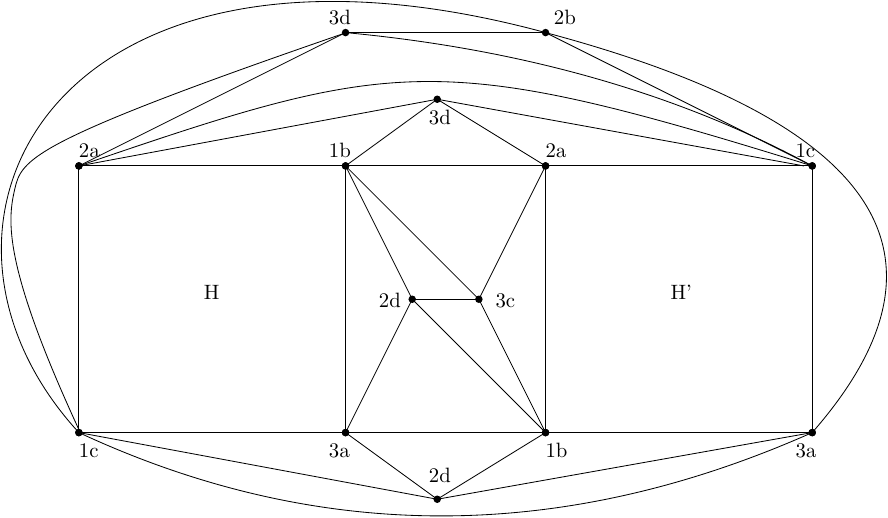}\end{center}
\caption{The construction}
\label{fig:fix}
\end{figure}
 
 \begin{proof}Observe by the construction of $G^*$ and the definition of $v$-goodness that $C = x_1 \dots x_{k}x_0$ for some $k \in \{4, 5, 6\}$. We distinguish two cases.
 
 \medskip
 
 \textbf{Case 1:} $k = 4$. Without loss of generality,  we have $c'_1(x_1) = c'_1(x_3)$, and the goodness of $c_1$ implies $c'_1(x_2) \not= c'_1(x_4)$. We distinguish two subcases.
 
\medskip

\textbf{Subcase 1.1:} $c'_2(x_1) = c'_2(x_3)$. In this case, we add two exterior vertices $w_1, w_2$ where $w_1$ is adjacent to $w_2, x_1, x_3, x_4$ and $w_2$ is adjacent to $w_1, x_1, x_2, x_3$; see Figure \ref{fig:case1} (where the coloring $c'_1$ is represented by colors 1--3 and the coloring $c'_2$ is represented by colors a--c).  

\medskip

\textbf{Subcase 1.2:} $c'_2(x_1) \not=c'_2(x_3)$. In this case, we arrive at the configuration in Figure \ref{fig:bad}. We take a copy $H'$ of $H$ (where the vertices with colors $1b$ and $1c$ are flipped) and add edges and new vertices as shown in Figure \ref{fig:fix}. This completes the case $k = 4$.

\medskip

\textbf{Case 2:} $k \in \{5, 6\}$. Following Proposition 4.3 in \cite{mohar1} verbatim, we show that there are non-consecutive vertices $x_i, x_j$ on $C$ such that $c'_1(x_i) \not=  c'_1(x_j)$ and $c'_2(x_i) \not= c'_2(x_j)$.  We may assume that $c'_1(x_1) = c'_1(x_3)$. Then $c'_1(x_1) \not=c'_1(x_4)$ so $c'_2(x_1) = c'_2(x_4) = a$ . Suppose that $c'_1(x_2) = 2$ and $c'_2(x_2) = b$. Since $b = c'_2(x_2) \not= c'_2(x_4) = a$, we have $c'_1(x_4) = c'_1(x_2) = 2$. Now, $c'_1(x_2) \not= c'_1(x_5)$, so $c'_2(x_5) = b$. Next, $c'_2(x_5) \not= c'_2(x_3)$ implies that $c'_1(x_5) = c'_1(x_3) = 1$, but then $x_1$ and $x_5$ are colored alike under $c_1'$ and so cannot be adjacent. It follows, in particular, that $k = 6$. Similar conclusions as before imply that $c'_1(x_6) = 2$ and $c'_2(x_3) = c'_2(x_6) = c$. But then $c_1$ is not  a good $4$-coloring of $G$ by (\ref{eq:1}) and our aim is achieved. 

We proceed to add the edge $x_ix_j$ outside $C$.  If $k = 5$ then $x_j = x_{i+2}$ and we let the cycle $C' = x_ix_{i+2}x_{i+3}x_{i+4}x_i$ play the role of $C$ in Case 1. Similarly, if $k = 6$ and $x_j = x_{i+2}$, we let the cycle $C' = x_ix_{i+2}x_{i+3}x_{i+4} x_{i + 5}x_i$ play the role of $C'$ in the preceding sentence.

We are left to consider the case $k = 6$ with  $i = 1$ and $j = 4$. We distinguish two cases.

\medskip

\textbf{Subcase 2.1:} the restrictions of $c'_1$ and $c'_2$ to the cycle $C^1=x_1x_2x_3x_4x_1$ or the cycle $C^2 = x_1x_6x_5x_4x_1$ is not as in Figure \ref{fig:bad}. In this case, we apply the argument from Case 1 simultaneously to $C^1$ and $C^2$. Precisely, if both are not as in Figure \ref{fig:bad}, then we apply the construction of Subcase 1.1 for one cycle by placing the two new vertices in its interior and then we apply the construction of Subcase 1.1 for the other cycle by placing the two new vertices in its exterior; similarly, if only of them is not as in Figure \ref{fig:bad}, then we apply the construction of Subcase 1.1 for this cycle by placing the two new vertices in its interior and then we apply the construction of Subcase 1.2 for the other cycle. The resulting graph satisfies the claim and Subcase 2.1 is completed.

\medskip

\textbf{Subcase 2.2:} the restrictions of $c'_1$ and $c'_2$ to both $C^1$ and $C^2$ are as in Figure \ref{fig:bad}. In this case, we may assume $c'_1(x_1) = 1 \not=c'_1(x_4) = 2$ and $c'_2(x_1) = a \not= c'_2(x_4) = b$. We may also assume that $c'_1(x_3) = c'_1(x_1)= 1$, $c'_2(x_4) = c'_2(x_2) = b$, $c'_1(x_2) = 3$ and $c'_2(x_3) = c$.  For the colorings of $C^2$, there are two possibilities meeting the requirements of Figure \ref{fig:bad}. 

The first possibility is $c'_1(x_5) = c'_1(x_1) = 1$, $c'_2(x_6) = c'_2(x_4) = b$, $c'_1(x_6) = 3$ and $c'_2(x_5) = c$. In this case, we delete the edge $x_1x_4$ and add instead the edge $x_3x_6$ outside $C$. The edge $x_3x_6$ splits $C$ into two parts which do not both match Figure \ref{fig:bad}, and hence we can apply the procedure as in the treatment of Subcase 2.1. 

The second and last possibility is $c'_2(x_5) = c'_2(x_1) = a$, $c'_1(x_6) = c'_1(x_4) = 2$, $c'_1(x_5) = 3$ and $c'_2(x_6) = c$. In this case, we delete the edge $x_1x_4$ and add instead the edge $x_2x_6$ outside $C$ and proceed as before. The claim is proved. 
\end{proof}

 We can now complete the proof of the lemma. By Lemma \ref{lem:fisk}, $c_2^N$ is $K_{V(N)}$-equivalent to $c_1^N$. By Lemma \ref{lem:super}, $c'_2$ is $K_{V(H)}$-equivalent to $c'_1$. Since $c_2^m(v) = c_1^m(v) = 4$, it follows vacuously that $c_2^m$ is $K$-equivalent to $c_1^m$. By successively applying Lemma \ref{lem:super}  to $G_{m - 1} \subseteq G_m$ etc. up until $G_0 \subseteq G_1$ we conclude that $c_2^{m-1}$ is $K$-equivalent to $c_1^{m-1}$ in $G_{m-1}$, etc. until finally concluding that $c_1^* = c_1^0$ is $K$-equivalent to $c_2^0 = c_2^*$ in $G^*$. By Lemma \ref{lem:super}, $c_2$ is $K$-equivalent to $c_1$. The lemma is proved. 
\end{proof}

We are now ready to prove Theorem \ref{thm:main}. 

\begin{proof}[Proof of Theorem \ref{thm:main}]
Suppose $G$ has a vertex $v$ of degree three or less. Since $G$ is $4$-critical, $G - v$ is $3$-colorable.  By Lemma \ref{lem:mohar}, $\mbox{Kc}(G - v, 4) = 1$. By Lemma \ref{lem:three}, $\mbox{Kc}(G, 4) = 1$. So we can assume that every vertex of $G$ has degree at least four. By Lemma \ref{lem:four}, $G$ has a vertex $v$ of degree exactly four. Let $\varphi_1$ and $\varphi_2$ be $4$-coloring of $G$. To prove the theorem, it suffices to show that $\varphi_1$ and $\varphi_2$ are $K$-equivalent.

By Lemma \ref{lem:good}, we may assume that $\varphi_1$ and $\varphi_2$ are $v$-good $4$-colorings of $G$. Since $G$ is $4$-critical, there is a $v$-good $4$-coloring $\psi$ of $G$ such that $\psi \restriction (G - v)$ is a $3$-coloring. By Lemma \ref{lem:final}, for $i \in \{1, 2\}$ $\varphi_i$ is $K$-equivalent to $\psi$.  Thus, $\varphi_1$ is $K$-equivalent to $\varphi_2$ and the theorem is proved. 
\end{proof}

\section{Final remarks}

Meyniel  \cite{meyniel1} showed that the set of $5$-colorings of a planar graph is a Kempe class. We propose the following generalization. A list
assignment of a graph is a function $L$ that assigns to each vertex $v$ a list $L(v)$ of
colors. The graph $G$ is $L$-colorable if it has a proper coloring $f$ such that $f(v) \in L(v)$ for each vertex $v$ of $G$.

\begin{conjecture}\label{conj}
Let $G$ be a planar graph, and $L$ be a list assignment for $G$ such that $|L(v)| \geq 5$ for each $v \in V(G)$. Then the set of $L$-colorings of $G$ forms a Kempe class. 
\end{conjecture}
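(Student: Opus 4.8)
The plan is to follow the degeneracy-based induction that underlies Meyniel's theorem, carried out in the list setting, and to isolate the one genuinely new difficulty that varying lists introduce. Throughout, I would fix the convention that a Kempe change on colors $\{a,b\}$ is \emph{admissible} only when swapping the chosen chain yields a proper $L$-coloring, that is, every chain vertex colored $a$ has $b$ in its list and vice versa; Kempe equivalence then refers to sequences of admissible changes. The first step is to establish the list analog of Lemma \ref{lem:three}: if $v$ is a vertex with $\deg(v) < |L(v)|$ and the $L$-colorings of $G - v$ (with lists restricted, still of size at least $5$) form a single Kempe class, then so do the $L$-colorings of $G$. The proof copies Mohar's dodging argument verbatim: since $|L(v)| > \deg(v)$, at every stage $v$ has a color in its list that is avoided by all of its neighbors, so an admissible swap sequence in $G-v$ can be lifted to $G$ by parking $v$ on a free color and fixing its final color at the end.

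With this tool in hand I would induct on $|V(G)|$. A planar graph has a vertex $v$ of degree at most $5$; if $\deg(v) \le 4$ then $\deg(v) < 5 \le |L(v)|$ and the previous step reduces the problem to $G-v$. Hence we may assume that $G$ has minimum degree $5$ and that the distinguished degree-$5$ vertex $v$ satisfies $|L(v)| = 5$. The aim is then to imitate Kempe's classical two-swap argument: by planarity the five neighbors of $v$ cannot be pairwise adjacent, so two of them, say $v_1$ and $v_3$, are non-adjacent; using a $\{a,b\}$-chain and then, if necessary, a $\{a',b'\}$-chain one would try to recolor so that the neighbors of $v$ use at most four colors, thereby freeing a color of $L(v)$ for $v$ and completing the reduction to $G - v$.

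The hard part will be exactly this last step, because \emph{list} Kempe chains need not be admissible: the two colors of the chain produced by Kempe's argument need not both lie in the list of every vertex of the chain, so the very swap meant to free a color for $v$ may be forbidden. Overcoming this is the heart of the conjecture, and I expect it to demand genuinely new ideas rather than a direct translation of the Las Vergnas--Meyniel proof. One promising route is to replace the ad hoc chain swaps by the triangulation machinery developed above for Theorem \ref{thm:main}: augment $G$ to a (near-)triangulation and transport colorings through a list version of Proposition \ref{lem:34}, then invoke the region-bounding cycle of the Fisk-type Lemma \ref{lem:fisk} so that the color interchanges one performs are automatically supported by the available lists. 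Controlling the admissibility of every swap while simultaneously preserving planarity and the list-size hypothesis is the obstacle on which I would expect to spend essentially all of the effort.
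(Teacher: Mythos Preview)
The paper does not prove this statement: it is stated as Conjecture~\ref{conj} and left open, with only the remark that the set of $L$-colorings is nonempty by Thomassen's theorem. There is therefore no proof in the paper to compare your proposal against.

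Your proposal is not a proof either, and you say so yourself: after the reduction via the list analog of Lemma~\ref{lem:three} you are left with the degree-$5$ case, and you explicitly write that overcoming the admissibility obstruction ``is the heart of the conjecture'' and will ``demand genuinely new ideas.'' So what you have is an outline that correctly locates the difficulty, not a resolution of it. Two comments on the outline. First, your list version of Lemma~\ref{lem:three} is fine: when $\deg(v)<|L(v)|$, moving $v$ to a free color of $L(v)$ is an admissible swap on a singleton chain, and any admissible swap in $G-v$ lifts to $G$ once $v$ is parked off the two active colors. Second, the suggestion to import the Fisk/triangulation machinery is much more speculative than you indicate. Lemma~\ref{lem:fisk} and Proposition~\ref{prop1} are statements about $4$-colorings of $3$-colorable triangulations; the singular/nonsingular dichotomy and the target ``unique $3$-coloring'' have no evident analog when the palette is an arbitrary union of $5$-lists, and Proposition~\ref{lem:34} produces a $3$-coloring of the added near-triangulation, which again presupposes a fixed small palette. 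None of that structure survives in the list setting without substantial new ideas, so this route does not obviously buy you anything over the bare Kempe-style attack you already described.
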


Note that the set of $L$-colorings of $G$ in the statement of Conjecture \ref{conj} is non-empty by a celebrated result of Thomassen \cite{thomassen1}. 

A fundamental theorem of Gr\"otzsch states that triangle-free planar graphs are $3$-colorable. Towards a Kempe analogue Gr\"otzsch's theorem, Salas and Sokal \cite{salas2022ergodicity} recently asked if $Kc(G, 3) = 1$ for every triangle-free planar graph $G$. We believe this to be true. 

\begin{conjecture}
If $G$ is a triangle-free planar graph, then $Kc(G, 3) = 1$.
\end{conjecture}

There are triangle-free planar graphs $G$ and a list assignment $L$ for $G$ such that $|L(v)| = 3$ for each $v \in V(G)$ with the property that $G$ is not $L$-colorable. However, if $G$ has girth at least $5$, then $G$ is always $L$-colorable \cite{thomassen2003short}. 

\begin{conjecture}
Let $G$ be a planar graph of girth $5$, and $L$ a list assignment for $G$ such that $|L(v)| \geq 3$ for each $v \in V(G)$. Then the set of $L$-colorings of $G$ forms a Kempe class. 
\end{conjecture}

\section*{Acknowledgments}

I am very grateful to Mykhaylo Tyomkyn for several helpful discussions. This work was partially supported by grant 19-21082S of the Czech Science Foundation and the French National Research Agency under research grant ANR DIGRAPHS ANR-19-CE48-0013-01

 \bibliography{bibliography}{}
\bibliographystyle{abbrv}
 
\end{document}